\documentclass{sl}     

\usepackage{slsec}     
\usepackage{stud_log} 
\usepackage{slfoot}
\usepackage{slthm}     
                         
\usepackage{amsmath}
\usepackage{amssymb}
\usepackage{mathrsfs}
\usepackage{enumerate}








\def\kn{\kern.1em}


\usepackage{amsopn}
\DeclareMathOperator{\ob}{ob}

\newcommand{\fivee}{
	If $Y\subseteq X$ and $Z\in\ob(X)$ and $Y\cap Z\ne\emptyset$, then $Z\in\ob(Y)$.
}
\newcommand{\fived}{
	If $Y\subseteq X$ and $Y\in\ob(X)$ and $X\subseteq Z$, then $(Z\setminus X)\cup Y\in\ob(Z)$.
}
\newcommand{\fivec}{
	If $Y\in\ob(X)$ and $Z\in\ob(X)$ then $Y\cap Z\in\ob(X)$.
}
\newcommand{\fiveb}{
	If $Y\cap X=Z\cap X$ then $Y\in\ob(X)$ iff $Z\in\ob(X)$.
}
\newcommand{\fivea}{
	$\emptyset\not\in\ob(X)$.
}
\newcommand{\tru}[1]{\left\|#1\right\|} 


\newtheorem{theo}{Theorem}[section]

\theoremstyle{definition}
\newtheorem{defi}[theo]{Definition}




\setcounter{vol}{82}    
\setcounter{rok}{2017}  
 

\HeadingsInfo{Bj\o rn Kjos-Hanssen}{A conflict between some semantic conditions of Carmo and Jones}

\begin{document}

\setcounter{page}{1}     

 

\AuthorTitle{Bj\o rn Kjos-Hanssen}{A conflict between some semantic conditions
\newline of Carmo and Jones for contrary-to-duty obligations}

   
\PresentedReceived{Richmond H.~Thomason}{July 11, 2016}


\begin{abstract}
	We show that Carmo and Jones' condition 5(e) conflicts with the other conditions on their models for contrary-to-duty obligations.
	We then propose a resolution to the conflict.
\end{abstract}

\Keywords{contrary-to-duty obligations, Chisholm's paradox, deontic logic}

\bigskip
We shall point out a conflict between some semantic conditions proposed by \textsc{Carmo and Jones}~\cite{CJ96, CJ02, CJ13} in the area
of contrary-to-duty obligations in deontic logic. Then, we shall propose a resolution of the conflict.
For a comprehensive treatment of contrary-to-duty obligations, introduced by \textsc{Chisholm}~\cite{Chisholm1963-CHICIA-2},
one may consult~\cite{SaintCroix2014}.

\section{The conflict} 
	Let $W$ be the set of possible worlds of a given model, and let $\mathscr P$ denote the power set operation.
	\textsc{Carmo and Jones}\footnote{
		Conditions 5(a)--(d) and 5(e) are first introduced on pages 331 and 341, respectively, of~\cite{CJ96}, where ob is called pi.
		5(a)--(d) are also given in~\cite{CJ02}, page 291, with 5(e) on page 319.
		The conditions 5(a)(b)(d)(e) and a condition (c$^*$) are given in~\cite{CJ13}, page 590.
	} give the following conditions (whose numbering we retain)
	on a function $ob : \mathscr P(W) \to \mathscr P(\mathscr P(W))$ picking out that which is obligatory in a given context.
	\begin{enumerate}
		\item[5(a)] \fivea
		\item[5(b)] \fiveb
		\item[5(c)] \fivec
		\item[5(d)] \fived
		\item[5(e)] \fivee
	\end{enumerate}
	We shall now show how these conditions, in particular 5(d) and 5(e), are inherently in conflict with each other.

\begin{defi}
	Suppose $X$ and $Y$ are subsets of $W$.
	We say that $X$ and $Y$ are \emph{mutually generic}, or \emph{in general position}, if all the four sets
	\[
		X\cap Y,\qquad X\setminus Y,\qquad Y\setminus X,\qquad W\setminus (X\cup Y)
	\]
	are nonempty.
	Let $A$ and $B$ be propositions.
	Let $\tru{A}$ be the set of worlds in which $A$ is true.
	We say that $A$ and $B$ are \emph{mutually generic} if $\tru{A}$ and $\tru{B}$ are mutually generic sets.
\end{defi}
\begin{theo}\label{main}
	Suppose $A$ and $B$ are mutually generic propositions, and $\ob$ satisfies conditions 5(b), 5(d), and 5(e).
	Then
	\[
		\tru{A}\in\ob(\tru{\top}) \implies \tru{B}\in\ob(\tru{\neg A}).
	\]
\end{theo}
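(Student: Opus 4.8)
The plan is to chain the three hypotheses 5(e), 5(d), 5(b) so as to transport the single given membership $\tru A\in\ob(\tru\top)$ into the desired one. Throughout I would write $a=\tru A$, $b=\tru B$, $W=\tru\top$, so that $\tru{\neg A}=W\setminus a$; the genericity hypothesis supplies in particular $a\ne\emptyset$ (from $a\cap b\ne\emptyset$) and $b\setminus a\ne\emptyset$, the only two nonemptiness facts the argument will consume.

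First I would restrict the context. Since $a\in\ob(W)$, condition 5(e) applied with ambient set $W$, obligatory set $a$, and any subcontext $Y$ meeting $a$ yields $a\in\ob(Y)$. The key design choice is which $Y$ to use: I take
\[
	Y=a\cup\bigl(W\setminus(a\cup b)\bigr),
\]
for which $a\subseteq Y\subseteq W$ (so $a\cap Y=a\ne\emptyset$, legitimizing the 5(e) step) and, crucially, $W\setminus Y=b\setminus a$.

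Next I would re-expand. Because $a\subseteq Y\subseteq W$ and $a\in\ob(Y)$, condition 5(d) delivers $(W\setminus Y)\cup a\in\ob(W)$; by the choice of $Y$ this is exactly $(b\setminus a)\cup a=a\cup b\in\ob(W)$. Thus the given obligation $a$ has been \emph{completed} to $a\cup b$ without leaving the ambient context. Now a second use of 5(e), with ambient set $W$, obligatory set $a\cup b$, and subcontext $W\setminus a=\tru{\neg A}$, applies because $(a\cup b)\cap(W\setminus a)=b\setminus a\ne\emptyset$; it gives $a\cup b\in\ob(\tru{\neg A})$. Finally, since $(a\cup b)\cap(W\setminus a)=b\cap(W\setminus a)$, condition 5(b) lets me swap $a\cup b$ for $b$ in this context, producing $b=\tru B\in\ob(\tru{\neg A})$, as required.

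The substance of the argument is entirely in the first step, namely choosing the intermediate context $Y$: one must shrink $W$ down to a context in which $a$ remains obligatory (5(e) going down) yet whose complement, when filled in by 5(d) (going back up), is precisely $b\setminus a$ rather than all of $W\setminus a$. Everything after that is forced. I expect the main obstacle to be noticing that 5(d) can only fill in the \emph{entire} complement $W\setminus Y$, so $Y$ must be engineered in advance to make that complement equal $b\setminus a$; the part $W\setminus(a\cup b)$ that I adjoin to $a$ is exactly what trims the fill-in down to size. The two 5(e) applications use the genericity-guaranteed inequalities $a\ne\emptyset$ and $b\setminus a\ne\emptyset$, while the concluding 5(b) step is an identity and needs no genericity.
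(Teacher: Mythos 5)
Your proof is correct and is essentially the paper's own argument: your intermediate context $Y=a\cup\bigl(W\setminus(a\cup b)\bigr)$ is exactly the paper's $\tru{A\vee\neg B}$, and the subsequent chain 5(e), 5(d), 5(e), 5(b) matches the paper's derivation step for step. The only difference is notational (sets rather than propositional formulas), plus your helpful explicit bookkeeping of which genericity clauses are actually consumed.
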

\begin{proof}
	Using $\tru{A\vee \neg(A\vee \neg B)} = \tru{A\vee B}$, we deduce:
	\begin{eqnarray*}
		\tru{A}\in&\ob(\tru{\top})                        & \text{(hypothesis)}\\
		\tru{A}\in&\ob(\tru{A\vee \neg B})                & \text{(by 5(e))}     \\
		\tru{A\vee \neg(A\vee \neg B)}\in&\ob(\tru{\top}) & \text{(by 5(d))}     \\
		\tru{A\vee B}\in&\ob(\tru{\top}) &      \\
		\tru{A\vee B}\in&\ob(\tru{\neg A})                & \text{(by 5(e))}     \\
		\tru{B}\in&\ob(\tru{\neg A})                      & \text{(by 5(b))}\qquad\qedhere
	\end{eqnarray*}
\end{proof}
The conclusion in Theorem~\ref{main} is patently absurd.
Just because $A$ was obligatory initially, why would we upon violation of $A$ have a contrary-to-duty obligation that $B$,
without any further information about $B$?

\bigskip
The argument just given is a refinement of a twenty-year-old one~\cite{K96}
The latter is summarized in~\cite[footnote 28]{CJ02}.
\textsc{Carmo and Jones} responded to~\cite{K96} in~\cite{CJ02}.
They argued that condition 5(c) should be weakened~\cite[p.~323]{CJ02}. Indeed, doing so takes care of the problem pointed out in~\cite{K96},
but the argument in Theorem~\ref{main} above makes no use of 5(c).

\textsc{Carmo and Jones} did see some potential for problems with the combination of just 5(b), (d), and (e)~\cite[pp.~319--320 and Figure 1]{CJ02}.
However, in the end they kept all three conditions for the system studied in~\cite{CJ13}.

If our aim were mostly destructive, then we could perhaps end this paper here. Instead, we shall now outline a possible constructive response.

\section{A resolution of the conflict}
	On our reading of \textsc{Carmo and Jones}, their conditions 5(d) and 5(e) belong to two distinct approaches.

	Both approaches start with a simpler function $F:\mathscr P(W)\rightarrow \mathscr P(W)$ which picks out the ideal worlds in a given context.
	According to the first approach, we then let
	\begin{equation}\label{sup}
		\ob(X) = \{Y: Y\supseteq F(X)\}.
	\end{equation}
	Thus, any sufficiently unrestrictive proposition will be obligatory.

	In the second approach, we let
	\begin{equation}\label{cap}
		\ob(X) = \{Y: Y\cap X = F(X)\}.
	\end{equation}
	Thus, there is essentially only one obligatory proposition in a given context.
	In both cases, we assume
	\begin{equation}\label{sub}
		F(X)\subseteq X
	\end{equation}
	and
	\begin{equation}\label{referee}
		X\ne\emptyset\implies F(X)\ne\emptyset.
	\end{equation}
	For (\ref{sup}), we consider the following two additional conditions.
	The first,
	\begin{equation}\label{can only relax}\tag{I-d}
		F(X\cap Y)\supseteq F(X)\cap Y,
	\end{equation}
	ensures 5(d). It expresses the idea that
	\begin{quote}
		our standards of perfection can only be relaxed, not strengthened, when moving to a more restricted context.
	\end{quote}
	The second,
	\begin{equation}\label{try not to relax}\tag{I-e}
		F(X\cap Y)=F(X)\cap Y\quad\text{ whenever }F(X)\cap Y\ne\emptyset,
	\end{equation}
	is a weakening of 5(e). It expresses the idea that
	\begin{quote}
		standards of perfection should only be relaxed when absolutely necessary.
	\end{quote}
	The Prisoners' Dilemma will serve to explain these conditions.
	There, four worlds are possible,
	depending on whether we and our fellow prisoner defect or not.
	Assuming we are completely selfish, these worlds are each given a numerical score, namely the number of units of
	time we must spend in prison. Among the possible contexts are
	``we and our fellow prisoner do the same thing'' and ``our fellow prisoner defects''. 
	In each case, $F(X)$ will consist exactly of those elements of $X$ in which we serve the smallest prison term.

	The first three conditions, 5(a)--(c), all hold both under (\ref{sup}) and (\ref{cap}).
	\begin{enumerate}
		\item[5(a)] \fivea
	\end{enumerate}
	For each of (\ref{sup}), (\ref{cap}) this is equivalent to $F(X)\ne\emptyset$, which follows from (\ref{referee}) when $X\ne\emptyset$.
	When $X\ne\emptyset$, we are forced by (\ref{sub}) to set $F(\emptyset)=\emptyset$.
	We do not expect that \textsc{Carmo and Jones} or anyone else would object strongly to $F(\emptyset)=\emptyset$.
	It merely concerns the ``corner case'' of the impossible context, in which we need not insist that anything is obligatory.
	\begin{enumerate}
		\item[5(b)] \fiveb
	\end{enumerate}
	This follows from (\ref{sub}) for both cases (\ref{sup}) and (\ref{cap}).
	\begin{enumerate}
		\item[5(c)] \fivec
	\end{enumerate}
	True for both (\ref{sup}) and (\ref{cap}).

	Things get more interesting with 5(d)--(e).
	\begin{enumerate}
		\item[5(d)] \fived
	\end{enumerate}
	Under (\ref{cap}), 5(d) is plainly
	unreasonable, as $Z\setminus X$ may contain many non-ideal worlds.
	\begin{theo}
		Under (\ref{sup}), 5(d) becomes true if we add condition (\ref{can only relax}).
	\end{theo}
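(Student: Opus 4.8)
The plan is to reduce condition 5(d) to a single set inclusion and then obtain that inclusion from one well-chosen instance of (\ref{can only relax}). Under (\ref{sup}), membership $V\in\ob(U)$ unwinds to $V\supseteq F(U)$, so the hypothesis $Y\in\ob(X)$ says exactly $F(X)\subseteq Y$, and the desired conclusion $(Z\setminus X)\cup Y\in\ob(Z)$ says exactly $F(Z)\subseteq(Z\setminus X)\cup Y$. Thus, after translation, the entire content of 5(d) is the implication
\[
	F(X)\subseteq Y\ \Longrightarrow\ F(Z)\subseteq(Z\setminus X)\cup Y,
\]
to be proved under the standing assumptions $Y\subseteq X\subseteq Z$.

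First I would feed the inclusion $X\subseteq Z$ into (\ref{can only relax}). Since $X\subseteq Z$ gives $Z\cap X=X$, the instance of (\ref{can only relax}) with its two arguments taken to be $Z$ and $X$ reads
\[
	F(X)=F(Z\cap X)\supseteq F(Z)\cap X,
\]
so that $F(Z)\cap X\subseteq F(X)\subseteq Y$, the last inclusion being precisely the translated hypothesis. This is the one nontrivial step; everything else is bookkeeping.

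To finish, I would take an arbitrary world $w\in F(Z)$ and split on whether $w\in X$. By (\ref{sub}) we have $w\in Z$. If $w\notin X$ then $w\in Z\setminus X$; if $w\in X$ then $w\in F(Z)\cap X\subseteq Y$. Either way $w\in(Z\setminus X)\cup Y$, which establishes $F(Z)\subseteq(Z\setminus X)\cup Y$ and hence 5(d).

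I expect the only real obstacle to be choosing the correct instance of (\ref{can only relax}): the condition is phrased for an intersection, and one must notice that the hypothesis $X\subseteq Z$ lets one rewrite the restricted context $X$ itself as such an intersection, namely $X=Z\cap X$, so that (\ref{can only relax}) delivers exactly the comparison between $F(Z)$ and $F(X)$ that the case analysis needs. No emptiness subtleties intervene, since the argument goes through verbatim even when $X=\emptyset$ (forcing $Y=\emptyset$ and $F(X)=\emptyset\subseteq Y$).
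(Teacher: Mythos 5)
Your proof is correct and follows essentially the same route as the paper's: the key step in both is instantiating (\ref{can only relax}) at $Z\cap X=X$ to get $F(Z)\cap X\subseteq F(X)\subseteq Y$, and your element-wise case split on $w\in X$ is just the pointwise form of the paper's decomposition $F(Z)=\left(F(Z)\cap X\right)\cup\left(F(Z)\cap(Z\setminus X)\right)$.
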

	\begin{proof}
		Suppose $Y\subseteq X$, $Y\in\ob(X)$, and $X\subseteq Z$.
		By (\ref{sup}), $Y\supseteq F(X)$.
		By (\ref{can only relax}), $F(Z)\cap X\subseteq F(Z\cap X)=F(X)$.
		Hence
		\begin{eqnarray*}
			F(Z) = F(Z)\cap Z &=& \left(F(Z)\cap X\right) \cup \left(F(Z)\cap (Z\setminus X)\right)\\
			                  &\subseteq&               Y \cup (Z\setminus X),
		\end{eqnarray*}
		as desired.
	\end{proof}
	We now consider
	\begin{enumerate}
		\item[5(e)] \fivee
	\end{enumerate}
	\begin{theo}
		Under (\ref{cap}), 5(e) becomes true if we add condition (\ref{try not to relax}).
	\end{theo}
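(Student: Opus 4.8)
The plan is to unfold both the hypothesis and the conclusion using the (\ref{cap}) definition of $\ob$, and then to invoke the weakened condition (\ref{try not to relax}) exactly once. Concretely, I would assume $Y\subseteq X$, $Z\in\ob(X)$, and $Y\cap Z\ne\emptyset$, aiming to show $Z\in\ob(Y)$. By (\ref{cap}), the hypothesis $Z\in\ob(X)$ says $Z\cap X=F(X)$, while the goal $Z\in\ob(Y)$ is the equation $Z\cap Y=F(Y)$.

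First I would exploit $Y\subseteq X$, which gives $X\cap Y=Y$. Intersecting the hypothesis $Z\cap X=F(X)$ with $Y$ then yields
\[
    F(X)\cap Y=(Z\cap X)\cap Y=Z\cap(X\cap Y)=Z\cap Y.
\]
In particular, the assumption $Y\cap Z\ne\emptyset$ tells us that $F(X)\cap Y\ne\emptyset$, which is precisely the trigger required by (\ref{try not to relax}).

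Applying (\ref{try not to relax}) in the form $F(X\cap Y)=F(X)\cap Y$, and rewriting $X\cap Y=Y$ on the left, gives $F(Y)=F(X)\cap Y=Z\cap Y$. This is exactly the equation characterizing $Z\in\ob(Y)$, completing the argument.

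The step to watch is the activation of (\ref{try not to relax}): the nonemptiness hypothesis $Y\cap Z\ne\emptyset$ built into 5(e) is what licenses the full equality in (\ref{try not to relax}), rather than merely the one-sided inclusion one would obtain from (\ref{can only relax}). Without it, the containment $F(Y)\supseteq Z\cap Y$ could fail, so the whole proof hinges on recognizing that the nonemptiness assumption of 5(e) matches the precondition of (\ref{try not to relax}) exactly.
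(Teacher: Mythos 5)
Your proof is correct and follows essentially the same route as the paper's: unfold $Z\in\ob(X)$ via (\ref{cap}) to get $Z\cap X=F(X)$, observe that $F(X)\cap Y=Z\cap Y\ne\emptyset$ triggers (\ref{try not to relax}), and conclude $F(Y)=F(X)\cap Y=Z\cap Y$, i.e.\ $Z\in\ob(Y)$. The only difference is that you make explicit the computation $F(X)\cap Y=(Z\cap X)\cap Y=Z\cap Y$ justifying the nonemptiness trigger, which the paper leaves implicit.
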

	\begin{proof}
		Suppose $Y\subseteq X$, $Z\in\ob(X)$, and $Y\cap Z\ne\emptyset$.
		By (\ref{cap}), $Z\cap X = F(X)$.
		By (\ref{try not to relax}), $F(Y) = F(Y\cap X)=F(X)\cap Y$ since $F(X)\cap Y\ne\emptyset$.
		Hence $Z\cap Y=F(Y)$, meaning by (\ref{cap}) that $Z\in\ob(Y)$, as desired.
	\end{proof}
	Under (\ref{sup}), 5(e) must be weakened. We find that much of the same spirit is retained by using (\ref{try not to relax}).

	\bigskip

	We could now go back to the rest of \textsc{Carmo and Jones}' system and explore two notions,
	corresponding to (\ref{sup}) and (\ref{cap}) respectively, of ``weakly'' and ``strongly'' obligatory propositions.
	We leave that for future research.

	If we define $A\Rightarrow B$ (or in another notation, $O(B\mid A)$) by $\tru{B}\in\ob(\tru{A})$ then we recover a \emph{standard conditional model} in the sense of Chellas~\cite[Chapter 10]{Chellas}; see also~\cite{LewisJournal},~\cite{Lewis73}.
	In fact, it is a special case of standard conditional model in which the truth of $A\Rightarrow B$ does not depend on the actual world.

	Finally, we mention a relationship with
	preference-based approaches (see \cite{PS97} and \cite[Section 7.3]{CJ02}).
	Our function $F$ induces a preference order by letting
    \[
        a\le b\iff (\forall X)(a\in F(X)\rightarrow b\in F(X)).
    \]
	Thus, $a$ is \emph{no more desirable than} $b$ ($a\le b$) if in any context $X$, if $a$ is ideal given $X$ then $b$ is ideal given $X$.
	This ordering can also be localized to a context:
    \[
        a\le_Y b\iff (\forall X\subseteq Y)(a\in F(Y)\rightarrow b\in F(Y)).
    \]
	While \textsc{Prakken and Sergot}~\cite{PS97} argued that a preference ordering on worlds is essential in order to cope with CTD-phenomena,
	\textsc{Carmo and Jones} concluded that it is not~\cite[page 343]{CJ02}.
	Instead they conjecture that
	``the concept(s) of \emph{settledness} provide the fundamental key to unravelling the tangled knot of CTD-problems'' [\emph{loc. cit.}, emphasis ours].

	We find some middle ground between these two conclusions.
	A preference ordering does appear, but it does so as a byproduct of a concept of settledness: the context $X$ is a settled fact when considering the ideal worlds in $F(X)$.

\paragraph{Acknowledgements.}
	This work was partially supported by a grant from the Simons Foundation (\#315188 to Bj\o rn Kjos-Hanssen). 
	This material is based upon work supported by the National Science Foundation under Grant No.\ 1545707.
	We would like to thank Professor Andrew J.I.~Jones for serving as adviser on the project leading to~\cite{K96}.
\newpage
\bibliographystyle{sl}
\bibliography{filhspe1}

\AuthorAdressEmail{Bj\o rn Kjos-Hanssen}{Department of Mathematics\\
University of Hawai\textquoteleft i at M\=anoa\\
2565 McCarthy Mall\\
Honolulu, U.S.A.}{bjoern.kjos-hanssen@hawaii.edu}

\end{document}